\theoremstyle{definition}
\newtheorem{lemma}{Lemma}[section]
\newtheorem{theorem}[lemma]{Theorem}
\newtheorem{corollary}[lemma]{Corollary}
\newtheorem{proposition}[lemma]{Proposition}
\newtheorem{definition}[lemma]{Definition}
\newtheorem{remark}[lemma]{Remark}
\begin{document}
\centerline{\Large\bf $\mathbb{Z}$-graded weak modules and regularity}

\vspace{0.5cm}

\centerline{ Chongying Dong \footnote{Supported by NSF grants, and a Faculty research grant from the University of California at Santa Cruz. }\   and \ \  Nina Yu}
\centerline{Department of Mathematics, University of California, Santa Cruz, CA 95064 }

\hspace{1cm}

\centerline{\bf { Abstract }}

It is proved that if any $\mathbb{Z}$-graded weak module for vertex
operator algebra $V$ is completely reducible, then $V$ is rational
and $C_{2}$-cofinite. That is, $V$ is regular. This gives a natural
characterization of regular vertex operator algebras.

\section{Introduction}

Rationality, $C_{2}$-cofiniteness and regularity are probably the
three most important concepts in representation theory of vertex operator
algebras. Rationality, which is an analog of semisimplicity of a finite
dimensional associative algebra or Lie algebra, asserts that the admissible
module category, or $\mathbb{Z}_{+}$-graded weak module category
is semisimple {[}DLM1{]}, {[}Z{]}. Originated from the modular invariance
of trace functions in vertex operator algebra {[}Z{]}, $C_{2}$-cofiniteness
tells us that certain subspace of a vertex operator algebra has finite
codimension. Regularity, which is the strongest among the three concepts,
claims that any weak module is direct sum of irreducible ordinary modules {[}DLM1{]}.

Both rationality and $C_{2}$-cofiniteness imply that there are only finitely
many irreducible admissible modules up to isomorphism and each irreducible
admissible module is ordinary (namely, each homogeneous subspace is
finite dimensional) {[}DLM2{]}, {[}KL{]}. It follows immediately from
definitions that regularity implies rationality. It is shown that
regularity also implies $C_{2}$-cofiniteness {[}L{]}. The equivalence
of regularity and rationality together with $C_{2}$-cofiniteness
is established in {[}ABD{]}. But the connection among the three concepts
has not been understood fully.

It is evident that the definition of rationality is natural, but definition
of regularity is not natural or satisfactory. It seems more natural to define regularity
by semisimplicity of the weak module category. This is successfully
achieved in the present paper. In fact, a stronger result is obtained.
That is, if any $\mathbb{Z}$-graded weak module is completely reducible,
then the vertex operator algebra is regular.

Clearly, if any $\mathbb{Z}$-graded weak module is completely reducible,
then the vertex operator algebra is rational. So it remains to prove
that if any $\mathbb{Z}$-graded weak module is completely reducible
for a vertex operator algebra $V$, then $V$ is $C_{2}$-cofinite.
It is well known that the graded dual $V'$ of $V$ is also a $V$-module
{[}FHL{]}. By a result from {[}L{]}, if $L(0)$ is semisimple on the
unique maximal weak module inside the completion of $V'$, then $V$
is $C_{2}$-cofinite. The main idea is to use the universal enveloping
algebra $\mathcal{U}(V)$ introduced in {[}FZ{]} to prove that any
weak module generated by a single vector is a quotient of a $\mathbb{Z}$-graded
weak module and that $L(0)$ acts semisimply on any irreducible submodule
of a $\mathbb{Z}$-graded weak module. It is worthy to point out that
this is perhaps the first time that the universal enveloping algebra
is to play a crucial role in the representation theory.

The paper is organized as follows. In Section 2, we review different
notions of modules and $C_{2}$-cofiniteness, rationality, and regularity
from {[}FLM{]}, {[}Z{]}, {[}DLM1{]}, {[}DLM2{]}. We also recall various
results from {[}L{]}, {[}KL{]}, {[}ABD{]} on rationality, regularity
and $C_{2}$-cofiniteness. Section 3 is devoted to the universal enveloping
algebra $\mathcal{U}(V)$, essentially following from {[}FZ{]}. We
show that any weak module generated by a single vector is a quotient
of a $\mathbb{Z}$-graded weak module. We establish the main theorem
in Section 4. That is, if any $\mathbb{Z}$-graded weak $V$-module
is completely reducible, then $V$ is $C_{2}$-cofinite, and consequently
regular.

\section{Preliminary}

Throughout this paper, we assume that $V$ is a simple vertex
operator algebra (cf. {[}B{]}, {[}FLM{]}). We first recall notions
of weak, admissible and ordinary modules from {[}FLM{]}, {[}Z{]},
{[}DLM1{]}.

\begin{definition}Let $(V,\ Y,\ 1,\ \omega)$ be a vertex operator
algebra. A \emph{weak module $M$ }for $V$ is a vector space equipped
with a linear map\[
Y_{M}:\ V\to(End\ M)[[z,\ z^{-1}]]\]
\[
v\mapsto Y_{M}(v,\ z)=\sum_{n\in\mathbb{Z}}v_{n}z^{-n-1},\ v_{n}\in End\ M\]
satisfying the following conditions:

1. $u_{m}w=0,$ for $u\in V,\ w\in M,$ $m\in\mathbb{Z}$ and $m$
large enough;

2. $Y_{M}(1,\ z)=Id_{M};$

3. For any $u,\ v\in V,$ \[
\begin{array}{c}
z_{0}^{-1}\delta\left(\frac{z_{1}-z_{2}}{z_{0}}\right)Y_{M}(u,\ z_{1})Y_{M}(v,\ z_{2})-z_{0}^{-1}\delta\left(\frac{z_{2}-z_{1}}{-z_{0}}\right)Y_{M}(v,\ z_{2})Y_{M}(u,\ z_{1})\\
=z_{2}^{-1}\delta\left(\frac{z_{1}-z_{0}}{z_{2}}\right)Y_{M}(Y(u,\ z_{0})v,\ z_{2}).\end{array}\]

\end{definition}

\begin{definition} An \emph{admissible $V$-module }$M$ is a weak
$V$-module which carries a $\mathbb{Z}_{+}$-grading $M=\oplus_{n\in\mathbb{Z}_{+}}M(n)$
satisfying the following condition: If $r,\ m\in\mathbb{Z},$ $n\in\mathbb{Z}_{+}$
and $u\in V_{r}$, then $u_{m}M(n)\subset M(r+n-m-1)$. \end{definition}

\begin{definition} An \emph{ordinary }$V$-\emph{module} $M$ is
a weak $V$-module such that $M=\oplus_{\lambda\in\mathbb{C}}M_{\lambda}$,
$\dim M_{\lambda}<\infty$ and $M_{\lambda+n}=0$ for fixed $\lambda\in\mathbb{C}$
and small enough $n\in\mathbb{Z},$ where $L(0)M_{\lambda}=\lambda$
and $Y_{M}(\omega,\ z)=\sum_{m\in\mathbb{Z}}L(m)z^{-m-2}$.

\end{definition}

It is easy to see that an ordinary module is always admissible.

\begin{definition} A vertex operator algebra $V$ is said to be \emph{rational
}if the admissible module category is semisimple.\end{definition}

We notice that the rationality is stronger than the complete reducibility of 
any admissible $V$-module as a submodule of a $\mathbb{Z}_+$-graded weak module is not necessarily 
$\mathbb{Z}_+$-graded.

It is proved in {[}DLM2{]} that if $V$ is rational, then there are
only finitely many irreducible admissible modules up to isomorphism
and any irreducible admissible module is ordinary. As pointed out
in the introduction, the $C_{2}$-cofiniteness (define below) also
implies the same results {[}KL{]}.

\begin{definition} A vertex operator algebra $V$ is said to be \emph{regular
}if any weak $V$-module $M$ is a direct sum of irreducible ordinary $V$-modules.
\end{definition}

\begin{definition} A vertex operator algebra $V$ is called $C_{2}$\emph{-cofinite
}if $\dim V/C_{2}(V)<\infty$, where $C_{2}(V)=\left\langle u_{-2}v\ |\ u,\ v\in V\right\rangle $.
\end{definition}

It is shown in {[}KL{]} and {[}ABD{]} that the regularity is equivalent
to rationality and $C_{2}$-cofiniteness. While the $C_{2}$-cofiniteness
does not imply rationality (cf. {[}A1{]}), it is widely believed that
rationality implies $C_{2}$-cofiniteness. One of the most important
conjectures in the theory of vertex operator algebras is that rationality
and regularity are equivalent. Many well known rational vertex operator
algebras such as lattice type vertex operator algebras {[}B{]}, {[}FLM{]},
{[}D{]}, {[}A2{]}, {[}DJL{]}, vertex operator algebras associated
to the integrable highest weight modules for affine Kac-Moody algebras
{[}FZ{]}, and vertex operator algebras associated to the minimal series
for Virasoro algebras {[}DMZ{]}, {[}W{]} are regular {[}DLM1{]}. Code
and framed vertex operator algebras are also regular
{[}M{]}, {[}DGH{]}.

Although the notion of regularity is very useful in the proof of modular
invariance of trace functions and Verlinde formula {[}Z{]}, {[}DLM3{]},
{[}H{]}, the definition is not natural at all as the irreducible objects
in the weak module category are assumed to be ordinary. The main purpose
of this paper is to define regularity naturally. For this we need
the following definition.

\begin{definition} An weak $V$-module $M$ is called \emph{$\mathbb{Z}$-graded
}if $M=\oplus_{i\in\mathbb{Z}}M(i)$ and for $r,\ m,\ n\in\mathbb{Z}$,
$v\in V_{n}$, $v_{r}M(m)\subset M(m+n-r-1)$. \end{definition}

We show in this paper that the regularity is equivalent to the complete reducibility of
any $\mathbb{Z}$-graded weak module. As a result, the regularity
is also equivalent to the semisimplicity of weak module category.

\section{Universal enveloping algebra}

The universal enveloping algebra $\mathcal{U}(V)$ for a vertex operator
algebra $V$ was defined in {[}FZ{]} in connection with Zhu's algebra
$A(V)$. We will extensively use $\mathcal{U}(V)$ to study the $\mathbb{Z}$-graded
weak $V$-modules in this paper. For the purpose of this paper, we
will modify the definition of universal enveloping algebra slightly
with the help of universal enveloping algebra of Lie algebras.

Let $V$ be a vertex operator algebra and $t$ an indeterminate. Consider
the tensor product

\[
\mathcal{L}(V)=\mathbb{C}[t,\ t^{-1}]\otimes V\]
Since $\mathbb{C}[t,\ t^{-1}]$ is a vertex algebra such that

\[
Y(f(t),\ z)g(t)=f(t+z)g(t)=(e^{z\frac{d}{dt}}f(t))g(t)\]
{[}B{]}, $\mathcal{L}(V)$ is a tensor product of vertex algebras
(cf. {[}DL{]}, {[}FHL{]}).

The $D$ operator of $\mathcal{L}(V)$ is given by $D=\frac{d}{dt}\otimes1+1\otimes L(-1),$
and

\[
L(V)=\mathcal{L}(V)/D\mathcal{L}(V)\]
 carries the structure of Lie algebra with bracket

\[
[u+D\mathcal{L}(V),\ v+D\mathcal{L}(V)]=u_{0}v+D\mathcal{L}(V)\]
 {[}B{]}. We use $a(n)$ to denote the image of $t^{n}\otimes a$
in $L(V)$. Then $\mathcal{L}(V)$ has a $\mathbb{Z}$-gradation given
by for homogeneous $a\in V$

\[
\deg a(n)=\mbox{wt}a-n-1\]
 As $D$ increases degree by 1 then $D\mathcal{L}(V)$ is a graded
subspace of $\mathcal{L}(V),$ so there is a naturally induced $\mathbb{Z}$-gradation
on $L(V).$

Let $U(L(V))$ be the universal enveloping algebra of $L(V).$ The
$\mathbb{Z}$-gradation on $L(V)$ induces naturally a $\mathbb{Z}$-gradation
on $U(L(V))$ so that for homogeneous $a^{i}\in V$

\[
deg(a^{1}(i_{1})\cdots a^{n}(i_{n}))=\sum_{k=1}^{n}(\mbox{wt}a^{k}-i_{k}-1).\]
Then $U(L(V))=\oplus_{k=-\infty}^{\infty}U(L(V))(k),$ where $U(L(V))(k)$
denotes the subspace consisting of the elements of degree $k,$ and
$U(L(V))(m)\cdot U(L(V))(n)\subset U(L(V))(m+n).$ We set \[
U(L(V))^{k}(n)=\sum_{i\le k}U(L(V))(n-i)\cdot U(L(V))(i).\]
Then $\{U(L(V))^{k}(n)\ |\ k\in\mathbb{Z}\}$ forms a fundamental
neighborhood system of $U(L(V))(n).$ Denote by $\widetilde{U}(L(V))(n)$
its completion. Then the direct sum $\widetilde{U}(L(V))=\sum_{n\in\mathbb{Z}}\widetilde{U}(L(V))(n)$
is a complete topological ring.

Denote by \[
J=\{\sum_{i=0}^{\infty}(-1)^{i}{r \choose i}(u(r+m-i)v(n+i)-(-1)^{r}v(r+n-i)u(m+i))\]
\[
-\sum_{i=0}^{\infty}{m \choose i}(u_{r+i}v)(m+n-i)\ |\ m,\ n,\ r\in\mathbb{Z},\ u,\ v\in V\}\]
 the subset of $\widetilde{U}(L(V))$. That is, $J$ is the set of
Jacobi relations. Define $\mathcal{U}(V),$ the \emph{universal enveloping
algebra }of $V,$ as the quotient of $\widetilde{U}(L(V))$ modulo
the two sided ideal generated by $J$. Clearly, the gradation in $\widetilde{U}(L(V))$
induces a gradation in $\mathcal{U}(V).$

Let $X=\left\langle v^{i}\ |\ i\in I\right\rangle $ be a basis
of $V$ consisting of homogeneous vectors, where $I$ is some countable
index set. For any $\overline{m}=(m_{i})_{i\in I},$ $m_{i}\in\mathbb{Z},\ $
define

\[
I_{\overline{m}}=\sum_{i\in I}\sum_{k_{i}\ge m_{i}}U(V)v^{i}(k_{i}),\]
then $I_{\overline{m}}$ is a left ideal of $\mathcal{U}(V).$

\begin{lemma} \label{U(V) weak module} $\mathcal{U}(V)/I_{\overline{m}}$
is a weak $V$-module generated by $1+I_{\overline{m}}$ such that
$a_{n}$ acts as $a(n)$ for $a\in V,\ n\in\mathbb{Z}$. \end{lemma}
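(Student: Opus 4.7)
The plan is to define an action of $V$ on $M := \mathcal{U}(V)/I_{\overline{m}}$ by
\[
Y_M(a,\,z)(w) \;:=\; \sum_{n\in\mathbb{Z}} \bigl(a(n)\cdot w\bigr)\, z^{-n-1},
\]
where $a(n) \in L(V) \subset \mathcal{U}(V)$ acts by left multiplication, and to verify each of the four weak-module axioms in turn. The action is well-defined because $I_{\overline{m}}$ is a left ideal of $\mathcal{U}(V)$. The Jacobi identity for $Y_M$ is automatic: extracting coefficients reproduces exactly the set of relations $J$ that we quotiented out when forming $\mathcal{U}(V)$, so this holds on the nose in $\mathcal{U}(V)$ and descends to $M$. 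Generation of $M$ by $1+I_{\overline{m}}$ is also immediate, since every class has the form $y+I_{\overline{m}} = y\cdot(1+I_{\overline{m}})$ with $y\in\mathcal{U}(V)$ a (convergent) sum of products of the modes $a(n)$. For the vacuum axiom $Y_M(\mathbf{1},z)=\mathrm{Id}_M$, I would first note $\mathbf{1}(n)=0$ in $L(V)$ for $n\neq -1$, using the identity $t^n\otimes\mathbf{1}=\tfrac{1}{n+1}D(t^{n+1}\otimes\mathbf{1})\in D\mathcal{L}(V)$ together with $L(-1)\mathbf{1}=0$; the Jacobi relation with $u=\mathbf{1}$ and $r=-1$ then gives $\mathbf{1}(-1)v(k)=v(k)$ in $\mathcal{U}(V)$ for every $v\in V,k\in\mathbb{Z}$, and combined with the normalization built into the modified universal enveloping algebra this shows $\mathbf{1}(-1)$ acts as the identity on every class in $M$.

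The substantive step is the truncation condition: for $a\in V$ and $w\in M$, we need $a(n)\cdot w = 0$ in $M$ for all sufficiently large $n$. The key tool is the commutator identity
\[
[a(n),\,b(k)] \;=\; \sum_{i\geq 0}\binom{n}{i}(a_i b)(n+k-i),
\]
which, thanks to lower truncation in $V$, is always a \emph{finite} sum in $\mathcal{U}(V)$. For a representative of the form $y = v^{j_1}(k_1)\cdots v^{j_r}(k_r)$, I would inductively commute $a(n)$ to the rightmost position. For $n$ large enough, two things happen simultaneously: (i) the terminal factor $a(n)$ itself lands in $I_{\overline{m}}$ (expand $a$ in the basis $X$ and take $n$ beyond the finitely many relevant bounds $m_\ell$); and (ii) every commutator-generated mode $(a_i v^{j_\ell})(n+k_\ell-i)$ has mode index exceeding the relevant $m_\ell$ (since $a_i v^{j_\ell}$ involves only finitely many basis vectors and the shift $n+k_\ell-i\to\infty$ with $n$). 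Every resulting term therefore lies in $I_{\overline{m}}$, giving $a(n)\cdot y\in I_{\overline{m}}$ as required.

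The main obstacle is the passage from the monomial case to general elements of the completion $\mathcal{U}(V)$: an arbitrary $y$ may be a genuinely infinite sum of decreasing degree in $\widetilde{U}(L(V))$, and the above bound must be uniform enough to survive the limit. My approach would be to show that the filtration topology is compatible with $I_{\overline{m}}$ in the sense that $I_{\overline{m}}$ absorbs a cofinal family of the defining neighborhoods $U(L(V))^k(n)$ once $k$ is small enough relative to $\overline{m}$; granted this, every class in $M$ admits a finite monomial representative and the previous paragraph takes over. This compatibility is the technical heart of the argument and is where I would expect to spend most of the effort.
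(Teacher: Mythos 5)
Your treatment of the substantive step is exactly the paper's proof: the paper also takes the Jacobi identity, the vacuum property and generation as built into the construction of $\mathcal{U}(V)$ and the left-ideal structure of $I_{\overline{m}}$, and devotes its entire proof to the truncation axiom, verified precisely as you do --- by induction on the length of a monomial $a^{1}(n_{1})\cdots a^{k}(n_{k})$, commuting $v(p)$ to the right via the finite commutator sum $[v(p),a(q)]=\sum_{i\ge 0}{p \choose i}(v_{i}a)(p+q-i)$ and observing that for $p\gg0$ every resulting term ends in a mode belonging to $I_{\overline{m}}$. Your extra detail on the vacuum axiom ($\mathbf{1}(n)\in D\mathcal{L}(V)$ for $n\neq-1$, and $\mathbf{1}(-1)v(k)=v(k)$ from the Jacobi relation with $u=\mathbf{1}$, $r=-1$, $m=0$) is correct and is simply left implicit in the paper; the only loose end there is that $\mathbf{1}(-1)$ fixing the class of $1$ itself really does rest on the normalization in the definition of the enveloping algebra, as you say, since $\mathbf{1}(-1)-1\in I_{\overline{m}}$ is not otherwise evident.

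Where you part company with the paper is your final paragraph. The paper does not address the passage from monomials to general elements of the completion at all: its proof checks truncation only on finite monomials, and later (in the countable-dimension lemma) it treats every class of $M$ as a finite combination of elements $u_{n}(1+I_{\overline{m}})$, i.e.\ it implicitly works with the image of $U(L(V))$. So relative to the paper you have omitted nothing; but be warned that the repair you sketch --- that $I_{\overline{m}}$ absorbs a cofinal family of the neighborhoods $U(L(V))^{k}(n)$ --- is doubtful as stated. An element of $U(L(V))(i)$ with $i$ very negative need not contain any mode of large index: for homogeneous $a$ of weight $h$, the mode $a(h)$ has degree $-1$, so $u\cdot a(h)^{N}$ lies in $U(L(V))^{-N}(n)$ while, if $m$ for the relevant basis vectors exceeds $h$, no factor is a generator of $I_{\overline{m}}$. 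Thus degree considerations and the left-ideal structure alone cannot give the absorption; any reduction of infinite representatives to finite ones would have to exploit the Jacobi relations themselves. In other words, your monomial computation establishes exactly what the paper establishes, and the "technical heart" you identify is a genuine subtlety, but the tool you propose for it is not the right one.
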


\begin{proof} By the construction of $\mathcal{U}(V),$ it suffices
to prove that $v_{p}a^{1}(n_{1})\cdots a^{k}(n_{k})\in I_{M}$ for
any $v\in X,\ a^{1}(n_{1})\cdots a^{k}(n_{k})\in\mathcal{U}(V),\ p\in\mathbb{Z}$
and $p\gg0$. We prove it by induction on $k$.

If $k=1$, then for any $u,\ v\in X,$ $p,\ q\in\mathbb{Z},$ we have

\[
v_{p}u(q)=[v(p),\ u(q)]+u(q)v(p)=\sum_{i\ge0}{p \choose i}(v_{i}u)(p+q-i)+u(q)v(p).\]
Since there exists some $N\in\mathbb{N}$ such that $v_{i}u=0,\ i\ge N,$
there are only finitely many terms in the first summand of the right hand side. Hence by
definition of $I_{\overline{m}}$ we have $(v_{i}u)(p+q-i)\in I_{\overline{m}},\ p\gg0$,
$i\ge0$. Clearly $u(q)v(p)\in I_{\overline{m}}$ for $p\gg0.$ Thus
we get $v_{p}u(q)\in I_{\overline{m}}$ for $p\gg0.$

Assume that $v_{p}a^{1}(n_{1})\cdots a^{k}(n_{k})\in I_{\overline{m}}$,
$p\in\mathbb{Z},\ p\gg0$, $\forall v\in X,\ a^{1}(n_{1})\cdots a^{k}(n_{k})\in\mathcal{U}(V)$.
Then

\begin{eqnarray*} v_{p}a^{1}(n_{1})\cdots a^{k+1}(n_{k+1}) & = & [v(p), a^{1}(n_{1})]a^{2}(n_{2})\cdots a^{k+1}(n_{k+1})\\  &  &  +a^{1}(n_{1})v(p)a^{2}(n_{2})\cdots a^{k+1}(n_{k+1})\\ & = & \sum_{j\ge0}(_{j}^{p})(v_{j}a^{1})(p+n_{1}-j)a^{2}(n_{2})\cdots a^{k+1}(n_{k+1}) \\ &  &  +a^{1}(n_{1})v(p)a^{2}(n_{2})\cdots a^{k+1}(n_{k+1}). \end{eqnarray*} Similar
to the case when $k=1,$ we get $\sum_{j\ge0}(_{j}^{p})(v_{j}a^{1})(p+n_{1}-j)a^{2}(n_{2})\cdots a^{k+1}(n_{k+1})\in I_{\overline{m}},\ p\gg0.$
Also, by induction assumption, we have $v(p)a^{2}(n_{2})\cdots a^{k+1}(n_{k+1})\in I_{\overline{m}}$,
$\ p\gg0$. Therefore we get $v_{p}a^{1}(n_{1})\cdots a^{k+1}(n_{k+1})\in I_{\overline{m}},\ p\gg0$.\end{proof}

\begin{remark} We have the following results:

(1) If $\overline{m}=\bar{0},$ then $u_{n}(1+I_{\overline{m}})=0,\ \forall u\in V,\ n\ge0$.
So $1+I_{\overline{0}}$ is a vacuum-like vector. From {[}LL{]} and
the simplicity of $V$, we immediately see that $\mathcal{U}(V)/I_{\overline{m}}\cong V$.

(2) It is possible that $\mathcal{U}(V)/I_{\overline{m}}=0.$ For
example, fix $i_{0}\in I,$ let $m_{i}=0$ if $i\not=i_{0}$ and $m_{i_{0}}=-1$,
then $\mathcal{U}(V)/I_{\overline{m}}$ is a quotient of $V$ from
(1). Identify $V$ with $\mathcal{U}(V)/I_{\overline{0}}$ such that
$v=v_{-1}(1+I_{\overline{0}})$ for any $v\in V$. Then $v^{i_{0}}=0$
in $\mathcal{U}(V)/I_{\overline{m}}$ and consequently $\mathcal{U}(V)/I_{\overline{m}}=0$.
\end{remark}

\begin{remark} It is obvious that the weak module constructed in
Lemma \ref{U(V) weak module} is $\mathbb{Z}$-graded as $I_{\overline{m}}$
is a $\mathbb{Z}$-graded left ideal. \end{remark}

The importance of this kind of weak module can be seen from the following
corollary.

\begin{corollary} \label{single v. weak module Z graded} If $W$
is a weak $V$-module generated by $w\in W,$ then there exists $\overline{m}=(m_{i})_{i\in I}$
such that $W$ is a quotient of $\mathcal{U}(V)/I_{\overline{m}}$.
In particular, any weak module generated by a single vector is a quotient
of $\mathbb{Z}$-graded weak module. \end{corollary}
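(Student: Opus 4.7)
The plan is to exhibit a surjection from $\mathcal{U}(V)/I_{\overline{m}}$ onto $W$ for a suitable choice of $\overline{m}$, using the fact that any weak $V$-module carries a natural action of $\mathcal{U}(V)$ and that the generator $w$ is annihilated by $v^i_n$ for $n$ sufficiently large.

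First, I would make precise the action of $\mathcal{U}(V)$ on $W$. By the truncation axiom in the definition of a weak module, for every $x\in W$ and every homogeneous $u\in V$ there exists $N(u,x)\in\mathbb{Z}$ with $u_n x=0$ for $n\geq N(u,x)$. Consequently, any element of the completion $\widetilde{U}(L(V))$ acts on a fixed vector of $W$ through only finitely many nonzero summands, so the assignment $a(n)\mapsto a_n$ extends to a well-defined action of $\widetilde{U}(L(V))$ on $W$. The Jacobi relation in the definition of a weak module implies that the two-sided ideal $J$ acts as zero, so this action descends to a $\mathcal{U}(V)$-module structure on $W$ in which $a(n)$ acts as $a_n$.

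Next, using the basis $X=\{v^i\mid i\in I\}$, define $m_i\in\mathbb{Z}$ to be any integer large enough that $v^i_n w=0$ for all $n\geq m_i$ (this is possible by the truncation property applied to $w$). Define $\phi:\mathcal{U}(V)\to W$ by $\phi(x)=x\cdot w$. This is a $\mathcal{U}(V)$-module map, and its image is a submodule of $W$ containing $w=1\cdot w$; since $W$ is generated by $w$, the image equals $W$, so $\phi$ is surjective. To check $I_{\overline{m}}\subset\ker\phi$, note that each generator $v^i(k_i)$ with $k_i\geq m_i$ satisfies $v^i(k_i)\cdot w=v^i_{k_i}w=0$ by choice of $m_i$, and $I_{\overline{m}}$ is the left ideal they generate, so $\phi$ annihilates $I_{\overline{m}}$. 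Therefore $\phi$ induces a surjection $\mathcal{U}(V)/I_{\overline{m}}\twoheadrightarrow W$ of weak $V$-modules. The final assertion that $W$ is a quotient of a $\mathbb{Z}$-graded weak module then follows from the remark preceding the corollary, which records that $\mathcal{U}(V)/I_{\overline{m}}$ is itself $\mathbb{Z}$-graded.

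The only place that requires genuine care is the first step: checking that infinite series in $\widetilde{U}(L(V))$ really give a well-defined operator on $W$, and that this operator respects the relations in $J$. This reduces to observing that when an element of $\widetilde{U}(L(V))(n)$ is applied to a vector $x\in W$, the lower-order truncation filtration $U(L(V))^{k}(n)$ meets the annihilator of $x$ for $k$ sufficiently small (because each individual $v^i_p$ annihilates $x$ for $p\gg 0$), so the sum collapses to a finite one; once the operator is defined, the Jacobi identity for weak modules is precisely the statement that $J$ acts trivially. This verification is essentially the same calculation that underlies Lemma \ref{U(V) weak module} and is the main technical point of the argument; everything else is formal.
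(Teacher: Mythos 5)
Your proposal is correct and follows essentially the same route as the paper: choose $m_i$ by the truncation property at the generator $w$ and observe that acting on $w$ gives a surjection $\mathcal{U}(V)/I_{\overline{m}}\twoheadrightarrow W$. The paper's proof is just this in compressed form (it treats the fact that any weak module is naturally a $\mathcal{U}(V)$-module, which you spell out carefully, as implicit in the construction of $\mathcal{U}(V)$).
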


\begin{proof}From the definition of weak module, for any $i$, there
exists $m_{i}\in\mathbb{Z}$ such that if $n\ge m_{i},\ u_{n}^{i}w=0.$
The result follows immediately.\end{proof}

\section {Main Theorem}

The relation between $\mathbb{Z}$-graded weak modules and regularity
is investigated in this section. We will use the $\mathbb{Z}$-graded
weak module constructed in Section 3 to establish the main theorem
of this paper. We start with the following version of Schur's Lemma
for infinite dimensional representation.

\begin{lemma} \label{-Schur's}Let $A$ be an associative algebra
over $\mathbb{C}$ and $V$ a simple $A$-module of countable dimension.
Then $Hom_{A}(V,\ V)=\mathbb{C}.$ \end{lemma}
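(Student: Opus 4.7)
The plan is to prove this version of Schur's lemma by the standard Dixmier-type argument, which upgrades ordinary Schur's lemma using the fact that $\mathbb{C}$ is algebraically closed of uncountable cardinality while $V$ is only countable-dimensional.

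First I would observe that any nonzero $\phi\in\mathrm{Hom}_A(V,V)$ is an $A$-module isomorphism: its kernel and image are submodules of the simple module $V$, so $\ker\phi=0$ and $\mathrm{im}\,\phi=V$. Hence $D=\mathrm{Hom}_A(V,V)$ is a division algebra over $\mathbb{C}$ containing $\mathbb{C}\cdot\mathrm{Id}_V$.

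Next I would bound $\dim_\mathbb{C}D$. Fix any nonzero $v\in V$ and consider the evaluation map $\mathrm{ev}_v:D\to V$, $\phi\mapsto\phi(v)$. If $\phi(v)=0$ then $\ker\phi\neq 0$, so $\phi=0$ by the previous paragraph; thus $\mathrm{ev}_v$ is injective and $\dim_\mathbb{C}D\le\dim_\mathbb{C}V\le\aleph_0$.

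Now assume for contradiction that some $\phi\in D$ is not a scalar. Since $\mathbb{C}$ is algebraically closed, if $\phi$ satisfied a nonzero polynomial $p(x)\in\mathbb{C}[x]$, we could factor $p(x)=c\prod(x-\lambda_i)$, and then in the division algebra $D$ some factor $\phi-\lambda_i\cdot\mathrm{Id}_V$ would have to be zero, forcing $\phi=\lambda_i\in\mathbb{C}$, a contradiction. Hence $\phi$ is transcendental over $\mathbb{C}$, and in particular $\phi-\lambda\cdot\mathrm{Id}_V$ is invertible in $D$ for every $\lambda\in\mathbb{C}$. I would then show the uncountable family
\[
\bigl\{(\phi-\lambda\cdot\mathrm{Id}_V)^{-1}\ \bigl|\ \lambda\in\mathbb{C}\bigr\}
\]
is linearly independent over $\mathbb{C}$: any finite linear relation $\sum_{i=1}^{n}c_i(\phi-\lambda_i\cdot\mathrm{Id}_V)^{-1}=0$ with distinct $\lambda_i$ may be multiplied by $\prod_{j=1}^{n}(\phi-\lambda_j\cdot\mathrm{Id}_V)$ (which commutes with everything in $\mathbb{C}[\phi]$) to produce a polynomial identity $q(\phi)=0$ with $\deg q<n$; transcendence of $\phi$ then forces $q=0$, which on substituting $x=\lambda_k$ gives $c_k\prod_{j\ne k}(\lambda_k-\lambda_j)=0$ and hence $c_k=0$ for every $k$. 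This contradicts $\dim_\mathbb{C}D\le\aleph_0$.

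The main obstacle is the independence argument in the last paragraph; the rest is routine Schur/division-algebra bookkeeping, and once independence is established the countability bound on $\dim_\mathbb{C}D$ closes the proof and yields $\mathrm{Hom}_A(V,V)=\mathbb{C}$.
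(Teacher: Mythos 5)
Your proof is correct and follows essentially the same Dixmier-type argument as the paper: show $\mathrm{Hom}_A(V,V)$ is a division algebra, use evaluation at a nonzero vector to bound its dimension by the countable dimension of $V$, and rule out a transcendental element by an uncountability argument. The only cosmetic difference is that you prove the linear independence of the family $(\phi-\lambda\cdot\mathrm{Id})^{-1}$, $\lambda\in\mathbb{C}$, explicitly, whereas the paper simply invokes the known fact that $\mathbb{C}(x)$ has uncountable dimension over $\mathbb{C}$ --- the same fact in disguise.
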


\begin{proof} A proof of this result may exist in the literature.
For the completeness of this paper, we present a short proof of this
result. Clearly, $Hom_{A}(V,\ V)$ is a division algebra over $\mathbb{C}$.
Fix $0\not=v\in V$, and define a linear map from $Hom_{A}(V,\ V)$
to $V$ such that $f$ is mapped to $f(v)$. Then the map is injective.
This implies that $Hom_{A}(V,\ V)$ has countable dimension over $\mathbb{C}$
from the assumption. Now fix $0\not=f\in Hom_{A}(V,\ V)$ and consider
the subfield $E=\mathbb{C}(f)$ of $Hom_{A}(V,\ V)$ generated by
$\mathbb{C}$ and $f$. Then $E$ has countable dimension over $\mathbb{C}.$
If $f$ is not algebraic over $\mathbb{C}$ then $E$ is isomorphic
to the field of rational functions $\mathbb{C}(x)$ where $x$ is
an indeterminate. Since $\mathbb{C}(x)$ has uncountable dimension
over $\mathbb{C}$, we have a contradiction. Consequently $f$ is
algebraic over $\mathbb{C}$ and $f\in\mathbb{C}$, the proof is completed.
\end{proof}

\begin{lemma} \label{countable dimension}Let $\overline{m}=(m_{i})_{i\in I}$.
Then the weak $V$-module $M=\mathcal{U}(V)/I_{\overline{m}}$ is
of countable dimension. In particular, any homogeneous subspace $M(n)$
has countable dimension for $n\in\mathbb{Z}$. \end{lemma}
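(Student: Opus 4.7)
The plan is to exhibit an explicit countable spanning set for $M$. Two ingredients make this straightforward. First, the vertex operator algebra $V$ itself has countable dimension, since by definition $V = \oplus_{n} V_n$ with each $V_n$ finite-dimensional and $V_n = 0$ for $n$ sufficiently small. Second, Lemma~\ref{U(V) weak module} asserts that $M$ is generated, as a weak $V$-module, by the single vector $\bar 1 := 1 + I_{\overline{m}}$, with the modes acting by $a_n = a(n)$.

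Putting these together, I would first fix a countable homogeneous basis $X = \{v^i : i \in I\}$ of $V$, so that $I$ is countable. Since $M$ is generated as a weak module by $\bar 1$ under the operators $u_n$ for $u \in V$ and $n \in \mathbb{Z}$, and since any $u \in V$ is a finite linear combination of the $v^i$, it follows that $M$ is spanned by the vectors
$$v^{i_1}(n_1) v^{i_2}(n_2) \cdots v^{i_k}(n_k)\, \bar 1, \qquad k \geq 0,\ i_j \in I,\ n_j \in \mathbb{Z}.$$
The indexing set here is a countable union (over $k \geq 0$) of the countable products $(I \times \mathbb{Z})^k$, hence countable. Therefore $\dim_{\mathbb{C}} M$ is at most countable.

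For the ``in particular'' part, the remark immediately following Lemma~\ref{U(V) weak module} notes that $M = \oplus_{n \in \mathbb{Z}} M(n)$ is $\mathbb{Z}$-graded, so each $M(n)$ is a subspace of the countable-dimensional space $M$ and hence of countable dimension itself. I do not anticipate any real obstacle: the only point meriting care is that ``generation as a weak module by a single vector'' genuinely means ``spanned by iterated applications of mode operators to that vector,'' which is the standard unpacking and is in fact already established in the proof of Lemma~\ref{U(V) weak module}.
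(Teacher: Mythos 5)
Your proposal is correct and follows essentially the same route as the paper: the paper likewise observes that $M$ is generated by $1+I_{\overline{m}}$, hence spanned by countably many mode applications to that vector (the paper even uses only single modes $u_n(1+I_{\overline{m}})$ with $u$ running over a countable basis), and the statement about each $M(n)$ follows since it is a subspace of $M$. Your version with iterated monomials and the explicit countability of the index set is just a slightly more detailed unpacking of the same argument.
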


\begin{proof} Since $M$ is generated by $1+I_{\overline{m}}$, we
have $M=\langle u_{n}(1+I_{\overline{m}})\ |\ u\in V,\ n\in\mathbb{Z}\rangle$.
Thus $M$ has countable dimension. \end{proof}

\begin{proposition} \label{Prop}Let $V$ be a vertex operator algebra
such that any $\mathbb{Z}$-graded weak $V$-module is completely reducible.
Then $L(0)$ acts semisimply on any irreducible weak $V$-module. In particular,
any irreducible weak module is $\mathbb{Z}$-graded.
\end{proposition}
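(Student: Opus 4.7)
The plan is to realize any irreducible weak $V$-module $W$ as a quotient of an irreducible $\mathbb{Z}$-graded weak module on which $L(0)$ decomposes as a scalar plus the grading operator. First I would pick $0\neq w\in W$ and invoke Corollary \ref{single v. weak module Z graded} to present $W$ as a quotient of some $\mathbb{Z}$-graded weak module $M=\mathcal{U}(V)/I_{\overline{m}}$. By hypothesis $M=\oplus_\alpha N_\alpha$ decomposes into irreducible $\mathbb{Z}$-graded weak submodules, and since $W$ is irreducible as a weak module each $N_\alpha$ either maps to $0$ or surjects onto $W$. Choosing one that surjects gives an irreducible $\mathbb{Z}$-graded weak module $N=\oplus_{i\in\mathbb{Z}}N(i)$ with $W$ as a weak-module quotient.

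The next step exploits the commutator relation $[L(0),v_n]=(\mathrm{wt}(v)-n-1)v_n$ for homogeneous $v\in V$. Letting $d$ denote the grading operator on $N$ (so $d|_{N(i)}=i\cdot\mathrm{Id}$), a short computation shows $L(0)-d$ commutes with every mode $v_n$, hence is a weak $V$-module endomorphism of $N$ that also preserves the grading. I would then argue that each nonzero $N(i)$ is \emph{simple} as a module over the degree-$0$ subalgebra $\mathcal{U}(V)(0)$: for a nonzero $\mathcal{U}(V)(0)$-submodule $N'\subset N(i)$, the subspace $\mathcal{U}(V)N'$ is automatically a nonzero $\mathbb{Z}$-graded weak submodule of $N$, hence equals $N$ by graded irreducibility, and intersecting with the degree-$i$ piece yields $N(i)=\mathcal{U}(V)(0)N'=N'$. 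Since $N(i)$ inherits countable dimension from Lemma \ref{countable dimension}, Lemma \ref{-Schur's} forces $(L(0)-d)|_{N(i)}=c_i\cdot\mathrm{Id}$ for some $c_i\in\mathbb{C}$.

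To see that $c_i$ is independent of $i$, note that $\ker\bigl((L(0)-d)-c_i\bigr)$ is a graded weak submodule of $N$ containing the nonzero piece $N(i)$, so by graded irreducibility it must equal all of $N$; thus $L(0)-d$ acts as a single scalar $c=c_i$ on $N$. Consequently $L(0)$ acts semisimply on $N$ with eigenvalues $\{i+c:N(i)\neq 0\}$, and these descend to the quotient $W$, giving semisimplicity of $L(0)$ on $W$. Since the eigenvalues differ by integers and are distinct, writing $W(i)$ for the $L(0)$-eigenspace of eigenvalue $i+c$ in $W$ gives $W=\oplus_{i\in\mathbb{Z}}W(i)$, and the required mode-compatibility $v_r W(m)\subset W(m+\mathrm{wt}(v)-r-1)$ follows directly from the commutator relation, making $W$ a $\mathbb{Z}$-graded weak module. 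I expect the main obstacle to be the identification of each nonzero $N(i)$ as a simple $\mathcal{U}(V)(0)$-module---the step that bridges the graded irreducibility of $N$ to an application of Schur's lemma on its homogeneous pieces.
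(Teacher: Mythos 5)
Your argument has a genuine gap at its very first structural step: you take the hypothesis to mean that $M=\mathcal{U}(V)/I_{\overline{m}}$ decomposes as a direct sum of \emph{irreducible $\mathbb{Z}$-graded weak submodules}. In this paper ``completely reducible'' means a direct sum of irreducible weak submodules, and such a summand need not be a graded subspace of $M$ --- this is exactly the subtlety the paper flags right after the definition of rationality (``a submodule of a $\mathbb{Z}_+$-graded weak module is not necessarily $\mathbb{Z}_+$-graded''), and the statement that one may choose the irreducible constituents to be graded is essentially the content of the corollary that is \emph{deduced from} this proposition (equivalence of complete reducibility with semisimplicity of the $\mathbb{Z}$-graded category). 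So by picking a graded irreducible summand $N$ you are assuming something very close to what has to be proved: once the summand is graded, the fact that $L(0)$ differs from the degree operator by a grading-preserving module endomorphism makes semisimplicity almost immediate, as your $L(0)-d$ computation shows. If the chosen irreducible summand mapping onto $W$ sits ``diagonally'' across the degrees of $M$, the operator $d$ does not preserve it and your entire commutant/Schur argument on the pieces $N(i)$ has nothing to act on.

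The paper's proof is designed precisely for this situation: it embeds the irreducible weak module $W$ into $M$ (with no gradedness assumed), writes each $w\in W$ in terms of its homogeneous components in $M$, and splits into two cases. If $W$ contains a nonzero homogeneous vector, it shows $W=\oplus_m (W\cap M(m))$ and that each $W\cap M(m)$ is a simple $\mathcal{U}(V)(0)$-module of countable dimension, then applies the countable-dimension Schur lemma to the central element $L(0)$ --- this part is parallel to your computation on $N(i)$. The essential extra work is the case where no nonzero element of $W$ is homogeneous: there one takes vectors of minimal length, fixes their ``pattern'' $(i_1,\dots,i_L)$, shows the set $K$ of vectors with that pattern is a simple $\mathcal{U}(V)(0)$-module (using that any $a\in\mathcal{U}(V)(n)$ with $n\neq 0$ sending $K$ into $W$ would produce a vector of smaller length), and applies Schur's lemma to $K$; semisimplicity of $L(0)$ on $W=\mathcal{U}(V)u$, $u\in K$, then follows from the integer grading of $\mathcal{U}(V)$. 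Your remaining steps (the graded Schur argument, constancy of the scalar across degrees, passage of $L(0)$-semisimplicity to the quotient, and the integer spacing of eigenvalues giving the $\mathbb{Z}$-grading of $W$) are fine, but they only apply after the missing case is handled or the hypothesis is strengthened.
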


\begin{proof} Let $W$ be an irreducible weak $V$-module. By Corollary
\ref{single v. weak module Z graded}, $W$ is a quotient of a weak
$V$-module $M=\mathcal{U}(V)/I_{\overline{m}}$ for some $\overline{m}.$
As pointed out in Section 3, $M=\oplus_{n\in\mathbb{Z}}M(n)$ is $\mathbb{Z}$-graded.
Since $M$ is completely reducible from the assumption, $W$ is an irreducible $\mathbb{Z}$-graded weak $V$-submodule of $M.$
For any $w\in W,$ we can write $w=w(i_{1})+\cdots+w(i_{k})$ with
$0\not=w(i_{j})\in M(i_{j})$, $1\le j\le k,$ $i_{1}<\cdots<i_{k}$.
In this case, we say $w$ is of length $k$, and denote $\ell(w)=k$.

\emph{Case 1}. If there exists $0\not=w\in W$ with $\ell(w)=1$,
i.e., there exists $w\in M(n)$ for some $n\in\mathbb{Z}.$

Set $W(m)=W\cap M(m)$, $\forall m\in\mathbb{Z}$. Since $W$ is irreducible,
the argument given in the proof of Lemma \ref{countable dimension}
shows that $W=\oplus_{m\in\mathbb{Z}}W(m)$. Note that $\mathcal{U}(V)(0)$
is a subalgebra of $\mathcal{U}(V)$ and $L(0)\in\mathcal{U}(V)(0)$.

Claim. $W(m)$ is an irreducible $\mathcal{U}(V)(0)$-module, $\forall m\in\mathbb{Z}$.

For any nonzero elements $u^{1},\ u^{2}\in W(m),$ note that we have
$W=\langle v_{k}u^{1}\ |\ v\in V,\ k\in\mathbb{Z}\rangle.$ Recall
$X=\{v^{i}\ |\ i\in I\}$ is a basis of $V$ consisting of homogeneous
vectors. Let $o(v^{i})=v_{\mbox{wt}v^{i}-1}^{i}$ and extend notation
to all of $V$. Then we have \[
W(m)=\langle o(v)u^{1}\ |\ v\in V\rangle.\]
In particular, $u^{2}=o(v)u^{1}$ for some $v\in V$. Thus the claim
is proved.

Since $L(0)$ is in the center of $\mathcal{U}(V)(0)$ and $M(m)$
is an irreducible $\mathcal{U}(V)(0)$-module with countable dimension,
by Lemma \ref{-Schur's}, $L(0)$ acts on $W(m)$ as a constant, $\forall m\in\mathbb{Z}$.
Thus $L(0)$ acts semisimply on $W$.

\emph{Case 2}. If $\ell\left(w\right)>1$ for any $0\not=w\in W$.
Since $\ell(w)<\infty,\ \forall w\in W,$ we can define $L=\min\{\ell(w)\ |\ 0\not=w\in W\}$.

Assume $0\not=x=\sum_{j=1}^{L}x(i_{j})\in W$ where $x(i_{j})\in M(i_{j})$
and $i_{1}<\cdots<i_{L}$. Call $(i_{1},\ \cdots,\ i_{L})$ the pattern
of $x$. For any homogeneous $v\in V$ and $n\in\mathbb{Z},$ either
$v_{n}x=0$ or $0\not=v_{n}x$ has the pattern $(i_{1}+\mbox{wt}v_{n},\ \cdots,\ i_{L}+\mbox{wt}v_{n})$.
Let $K$ be a subset of $W$ consisting of vectors with pattern $(i_{1},\ \cdots,\ i_{L})$
together with 0. It is easy to see that $K$ is a subspace of $W$.

Claim. $K$ is an irreducible $\mathcal{U}(V)(0)$-module.

Let $u,\ w$ be any nonzero elements in $K.$ We can write $u=u(i_{1})+\cdots u(i_{L})$,
$w=w(i_{1})+\cdots w(i_{L})$, where $\ u(i_{j}),\ w(i_{j})\in M(i_{j})$,
$1\le j\le L$. Since $W=\mathcal{U}(V)u$, there exists $a^{n_{i}}\in\mathcal{U}(V)(n_{i}),$
$1\le i\le T$, $n_{1}<n_{2}<\cdots<n_{T}$ such that $w=a^{n_{1}}u+\cdots+a^{n_{T}}u$.
Note that $a^{n_{j}}u$ has pattern $(i_{1}+n_{j},\ \cdots,\ i_{L}+n_{j}).$
Clearly, the term $a^{n_{T}}u(i_{L})$ in $a^{n_{T}}u=a^{n_{T}}u(i_{1})+\cdots+a^{n_{T}}u(i_{L})$
is zero if $n_T>0.$ This forces $a^{n_{T}}u=0$ if $n_T>0.$ Otherwise, we have a nonzero
vector in $W$ whose length is less than $L$, a contradiction. Similarly,
if $n_{1}<0,$ then $a^{n_{1}}u=0.$ Continuing in this way shows
that there exists an $i$ such that some $n_{i}=0$ and $w=a^{0}u$
. This implies that $K$ is an irreducible $\mathcal{U}(V)(0)$-module.

Since $L(0)$ is in the center of $\mathcal{U}(V)(0)$ and $K$ is
an irreducible $\mathcal{U}(V)(0)$-module with countable dimension,
by Lemma \ref{-Schur's}, $L(0)$ acts on $K$ as a constant. Since
$W$ is generated by $u\in K,$ $L(0)$ acts semisimply on $W.$ \end{proof}

The following corollary is immediate.

\begin{corollary} The complete reducibility of any $\mathbb{Z}$-graded weak $V$-module is equivalent
to the semisimplicity of the $\mathbb{Z}$-graded weak $V$-module category. In particular, the complete reducibility of any $\mathbb{Z}_+$-graded weak $V$-module is equivalent to
the semisimplicity of the admissible $V$-module category.
\end{corollary}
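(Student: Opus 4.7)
The Corollary is essentially an unpacking of Proposition~\ref{Prop}. First I would state the two notions precisely: complete reducibility of any $\mathbb{Z}$-graded weak $V$-module means each such module is a direct sum of irreducible weak $V$-submodules, whereas semisimplicity of the $\mathbb{Z}$-graded weak $V$-module category means each object decomposes as a direct sum of simple objects of the category, i.e., of $\mathbb{Z}$-graded weak modules admitting no proper nonzero $\mathbb{Z}$-graded weak submodule.

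For the forward direction, assume complete reducibility and let $M$ be $\mathbb{Z}$-graded with $M=\bigoplus_{i}N_{i}$, each $N_{i}$ irreducible as a weak module. Proposition~\ref{Prop} tells us each $N_{i}$ is itself $\mathbb{Z}$-graded. A short check, in the spirit of the Case~1 / Case~2 dichotomy of the proof of Proposition~\ref{Prop} run inside $M$ itself rather than inside $\mathcal{U}(V)/I_{\overline{m}}$, shows the intrinsic grading on $N_{i}$ agrees with the restriction of the grading on $M$. Since an irreducible weak module has no proper nonzero weak submodule whatsoever, each $N_{i}$ is a fortiori simple in the category, giving semisimplicity.

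For the converse, assume categorical semisimplicity and decompose $M=\bigoplus_{i}N_{i}$ into simple objects of the category. To recover complete reducibility I would show each $N_{i}$ is irreducible as a weak module: given a nonzero weak submodule $W\subseteq N_{i}$, form the $\mathbb{Z}$-graded hull $\widehat{W}$ by collecting all homogeneous components (in $N_{i}$'s grading) of elements of $W$; a direct verification shows $\widehat{W}$ is again a weak submodule, now $\mathbb{Z}$-graded, so simplicity of $N_{i}$ in the category forces $\widehat{W}=N_{i}$, and a length argument applied to a minimum-length element of $W$, echoing Case~2 of Proposition~\ref{Prop}, then forces $W=N_{i}$.

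The second sentence of the Corollary, concerning $\mathbb{Z}_{+}$-graded weak modules and the admissible category, follows by the identical argument with $\mathbb{Z}_{+}$ in place of $\mathbb{Z}$. The main technical hurdle is the converse direction, where Proposition~\ref{Prop} is not directly applicable (it is proved under the complete reducibility hypothesis we are trying to conclude) and its pattern/length technique must be adapted to run inside an arbitrary simple object of the category.
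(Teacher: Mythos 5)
The paper gives no written argument here: the corollary is declared immediate from Proposition \ref{Prop}, with ``simple object'' read as a $\mathbb{Z}$-graded weak module that is irreducible as a weak module. Under that reading the passage from categorical semisimplicity to complete reducibility is definitional, and the other direction is exactly your forward argument: decompose $M=\oplus_{i}N_{i}$ into irreducible weak submodules and invoke Proposition \ref{Prop} to see that each $N_{i}$ is $\mathbb{Z}$-graded, hence a simple object of the category. Two remarks on your version of this direction. Your auxiliary claim that the intrinsic grading of $N_{i}$ agrees with the restriction of the grading of $M$ is false in general: Case 2 of the proof of Proposition \ref{Prop} exists precisely because an irreducible weak submodule of a graded module need not be compatibly graded (take $M=N\oplus N'$, where $N'$ is the same irreducible graded module with its grading shifted by one, e.g.\ $N=V$ itself; a skew-diagonal copy of $N$ inside $M$ meets every $M(n)$ trivially). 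Fortunately the claim is not needed: it suffices that each $N_{i}$, with its own $L(0)$-grading, is a simple object of the category and that $M$ is their direct sum.

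The genuine gap is in your converse. By taking the simple objects to be graded modules with no proper nonzero graded submodule, you have turned the converse into a substantive assertion---that a graded-simple $\mathbb{Z}$-graded weak module is irreducible (or at least completely reducible) as a weak module---and your sketch does not establish it. The graded-hull step only yields $\widehat{W}=N_{i}$, which is nothing beyond graded-simplicity, and the ``length argument echoing Case 2'' does not run: in Case 2 of Proposition \ref{Prop} the pattern comparison uses that $W$ is irreducible, so that $W=\mathcal{U}(V)u$ for every nonzero $u\in W$ and Lemma \ref{-Schur's} applies to the countable-dimensional $\mathcal{U}(V)(0)$-module $K$; for an arbitrary nonzero weak submodule $W$ of a graded-simple $N_{i}$ none of this is available, and graded-simple does not imply simple in the general graded setting ($\mathbb{C}[t,t^{-1}]$ over itself is the standard example), so genuinely new input would be required, especially since in this direction the complete-reducibility hypothesis of Proposition \ref{Prop} is not at hand. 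The corollary is immediate---which is how the paper treats it---only under the weaker reading in which semisimplicity of the category means decomposition into graded submodules that are irreducible weak modules; then your converse is vacuous and only your (corrected) forward direction carries content. The same applies to your $\mathbb{Z}_{+}$/admissible paragraph, where in addition Proposition \ref{Prop} cannot be cited verbatim (its hypothesis concerns all $\mathbb{Z}$-graded modules), and its proof must be rerun inside the given $\mathbb{Z}_{+}$-graded module, as you indicate.
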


We are now in the position to prove the main theorem of this paper.

\begin{theorem} \label{Thm-Z-graded S.S.->regular}For a vertex operator
algebra $V$, if any $\mathbb{Z}$-graded weak $V$-module 
is completely reducible, then $V$ is regular. \end{theorem}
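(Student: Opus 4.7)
The plan is to apply the [ABD] criterion quoted in Section 2: regularity is equivalent to rationality together with $C_2$-cofiniteness. Rationality is essentially immediate from the hypothesis, since every admissible module is in particular $\mathbb{Z}$-graded, so complete reducibility in the $\mathbb{Z}$-graded category specializes to the admissible category. The substantive work is therefore to prove $C_2$-cofiniteness.

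For this I would invoke the criterion from [L] flagged in the introduction: $V$ is $C_2$-cofinite provided $L(0)$ acts semisimply on the unique maximal weak submodule sitting inside the formal completion of the graded dual $V'$. Since $V'$ itself is a weak $V$-module by [FHL], this reduces matters to showing that $L(0)$ is diagonalizable on every weak $V$-module, after which one applies the criterion to the particular weak module produced by [L].

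Thus the main step is the following strengthening of Proposition \ref{Prop}: under the hypothesis of the theorem, $L(0)$ acts semisimply on every weak $V$-module $W$. Take any $w\in W$ and let $U(w)$ denote the cyclic submodule it generates. By Corollary \ref{single v. weak module Z graded}, $U(w)$ is a quotient of some $\mathbb{Z}$-graded weak module $\mathcal{U}(V)/I_{\overline{m}}$, which by hypothesis is completely reducible; hence $U(w)$ itself is a direct sum of irreducible weak submodules. By Proposition \ref{Prop} each irreducible summand is $\mathbb{Z}$-graded with $L(0)$ scalar on each homogeneous piece, so $L(0)$ is diagonalizable on $U(w)$. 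Since $W=\sum_{w\in W} U(w)$, every vector of $W$ is a finite sum of $L(0)$-eigenvectors, so $L(0)$ is diagonalizable on $W$.

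The main obstacle, such as it is, is purely bookkeeping: one must verify that the maximal weak submodule inside the completion of $V'$ really is a weak module to which the preceding argument applies. This is precisely the context in which [L] formulated the cited criterion, so the citation closes that gap. Combining $C_2$-cofiniteness with rationality, the [ABD] equivalence then yields regularity.
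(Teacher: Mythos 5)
Your proposal is correct and follows essentially the same route as the paper: reduce to $C_{2}$-cofiniteness via the [ABD] equivalence, then obtain it from Li's criterion on the maximal weak submodule $D(V')\subset V^{*}$ by combining Corollary \ref{single v. weak module Z graded} with Proposition \ref{Prop} to get semisimplicity of $L(0)$ there; the paper packages this as a contradiction argument with a single vector $f\in D(V')\setminus V'$ (showing $V'=D(V')$) rather than your mildly more general statement that $L(0)$ is semisimple on every weak module, but the ingredients are identical. The only step you state more casually than the paper is rationality: the irreducible summands of an admissible module are a priori just weak submodules, and one needs Proposition \ref{Prop} (as in the corollary following it) to conclude they are admissible, since a submodule of a $\mathbb{Z}_{+}$-graded weak module need not inherit the grading.
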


\begin{proof}  As mentioned before, we only need to prove that $V$
is $C_{2}$-cofinite. Recall from {[}FHL{]} that the graded dual $V^{'}=\oplus_{n=0}^{\infty}V_{n}^{\ast}$
of $V$ is also an irreducible $V$-module. In particular, $V'$ is
a $\mathcal{L}(V)$-module. We extend the action of $\mathcal{L}(V)$
from $V'$ to $V^{\ast}=\prod_{n\in\mathbb{Z}}V_{n}^{\ast}$ in an
obvious way to make $V^{\ast}$ an $\mathcal{L}(V)$-module. Following
{[}L{]}, we set \[
D(V')=\{f\in V^{\ast}\ |\ u_{n}f=0,\ \forall u\in V,\ n\gg0\}.\]
Then $D(V')$ is the unique maximal weak module inside $V^{\ast}$
{[}L{]}. According to {[}L{]}, if $V'=D(V')$ then $V$ is $C_{2}$-cofinite.
We now prove that $V'=D(V')$.

If $V'\not=D(V')$, then there exists $f=(f_{n})_{n\in\mathbb{Z}}\in D(V')$
such that there are infinitely many nonzero components of $f$. Since
$L(0)f_{n}=nf_{n},$ $\forall n\in\mathbb{Z}.$ $f$ is not contained
in any finite dimensional $L(0)$-invariant subspace of $D(V')$.
Let $W$ be the weak submodule of $D(V')$ generated by $f$. It follows
from Corollary \ref{single v. weak module Z graded} that $W$ is
a quotient of a $\mathbb{Z}$-graded weak $V$-module $M$. From the
assumption that any $\mathbb{Z}$-graded module is completely reducible,
we see that $W$ is a submodule of a $\mathbb{Z}$-graded weak module $M.$
By Proposition \ref{Prop}, $L(0)$ is semisimple on $M$. In particular,
$L(0)$ is semisimple on $W$. This implies that $f\in W$ is contained
in a finite dimension $L(0)$-invariant subspace. This is a contradiction.
The proof is complete. \end{proof}

\begin{corollary} Let $V$ be a vertex operator algebra, then the
following statements are equivalent:

1) The weak $V$-module category is semisimple,

2) Any $\mathbb{Z}$-graded weak $V$-module is completely reducible,

3) $V$ is regular. \end{corollary}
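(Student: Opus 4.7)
The plan is to close the implication cycle $(3) \Rightarrow (1) \Rightarrow (2) \Rightarrow (3)$, invoking Theorem \ref{Thm-Z-graded S.S.->regular} for the only nontrivial step and reducing the other two implications to unpacking the definitions.

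For $(3) \Rightarrow (1)$, regularity says by definition that every weak $V$-module decomposes as a direct sum of irreducible ordinary submodules. Since an irreducible ordinary module is in particular an irreducible object of the weak module category, this decomposition exhibits every weak module as a direct sum of simple objects, which is exactly semisimplicity of the weak module category.

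For $(1) \Rightarrow (2)$, a $\mathbb{Z}$-graded weak $V$-module is, upon forgetting the grading, just a weak $V$-module, and hence completely reducible by $(1)$. Note that statement $(2)$ only asks for complete reducibility as a weak module and imposes no compatibility of the decomposition with the $\mathbb{Z}$-grading, so there is nothing further to check.

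Finally, $(2) \Rightarrow (3)$ is exactly Theorem \ref{Thm-Z-graded S.S.->regular}. Because all the substantive work has already been done in proving that theorem (and in the preparation for it via Corollary \ref{single v. weak module Z graded} and Proposition \ref{Prop}), the only ``obstacle'' in this corollary is to recognize that the three conditions chain back to each other; no new ingredient is required beyond the already-established results.
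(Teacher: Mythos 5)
Your implication cycle $(3)\Rightarrow(1)\Rightarrow(2)\Rightarrow(3)$ matches the paper's overall structure, and the steps you dismiss as definitional are essentially the ones the paper calls obvious; the one place where you diverge is $(1)\Rightarrow(2)$. You dispose of it by forgetting the grading: a $\mathbb{Z}$-graded weak module is in particular a weak module, hence a direct sum of irreducible weak submodules by $(1)$. This is valid if ``completely reducible'' in $(2)$ is taken to mean ``direct sum of irreducible weak submodules,'' and that is indeed the sense in which the hypothesis is actually used in Proposition \ref{Prop} and Theorem \ref{Thm-Z-graded S.S.->regular} (the case analysis in Proposition \ref{Prop} does not assume the summands are graded), so your cycle does close. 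The paper, however, proves $(1)\Rightarrow(2)$ differently: it reduces the implication to showing that every irreducible weak module is $\mathbb{Z}$-graded and quotes Proposition \ref{Prop}, in line with the corollary following that proposition which identifies complete reducibility of $\mathbb{Z}$-graded weak modules with semisimplicity of the $\mathbb{Z}$-graded weak module category; under that stronger reading of $(2)$ your remark that ``there is nothing further to check'' is precisely the point that needs checking. The fix is cheap and is enabled by your own observation: $(1)$ trivially supplies the hypothesis of Proposition \ref{Prop}, which then says $L(0)$ acts semisimply on each irreducible weak summand, so the decomposition does live in the graded category. One further small point: in $(3)\Rightarrow(1)$ your claim that an irreducible ordinary module is a simple object of the weak category silently uses that any weak submodule of an ordinary module is graded (each $L(0)$-eigencomponent of a vector is recovered by a polynomial in $L(0)$); the paper leaves this implicit as well, but it is worth making explicit.
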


\begin{proof} By Theorem \ref{Thm-Z-graded S.S.->regular}, 2) implies
3). The implication of 1) from 3) is obvious. We only need to show
that 1) implies 2). So it is enough to prove that any irreducible
weak $V$-module is $\mathbb{Z}$-graded. But this is clear from Proposition
\ref{Prop}. \end{proof}


\begin{thebibliography}{LLLL}

\bibitem [A1] {A1} T. Abe, A $\mathbb{Z}_{2}$-orbifold model of
the symplectic fermionic vertex operator superalgebra,  \emph{Math.
Z.}   \textbf{255} (2007),  755-792.

\bibitem [A2] {A2} T. Abe, Rationality of the vertex operator algebra
$V_{L^{+}}$ for a positive definite even lattice  $L$,   \emph{ Math. Z.}
 \textbf{249} (2005),  455-484.

\bibitem [ABD]{ABD} T. Abe, G. Buhl and C. Dong, Rationality, Regularity,
and $C_{2}$-cofiniteness,  \emph{Trans. Amer. Math. Soc. }  \textbf{356}
(2004), 3391-3402.

\bibitem [B] {B} R. Borcherds, Vertex algebras, Kac-Moody algebras,
and the Monster, \emph{Proc. Natl. Acad. Sci}. USA  \textbf{83} (1986),
3068-3071.

\bibitem [D] {D} C. Dong, Vertex algebras associated with even lattices, \emph{
J. Alg.}  \textbf{161} (1993), 245-265.

\bibitem [DGH] {DGH} C. Dong, R. L. Griess and G. $\mbox{H}\ddot{\mbox{o}}\mbox{hn}$,
Framed vertex operator algebras, codes and the Moonshine module, \emph{Comm.
Math. Phys.}  \textbf{193} (1998),  407-448.

\bibitem [DJL] {DJL} C. Dong, C. Jiang and X. Lin, Rationality of
vertex operator algebra $V_{L^{+}}:$ higher rank, arXiv: 1011.5526.

\bibitem [DL]{DL} C. Dong and J. Lepowsky, Generalized Vertex Algebras
and Relative Vertex Operators, \emph{Progress in Math.}  \textbf{Vol.
112, }Birkhauser, Boston, 1993.

\bibitem [DLM1]{DLM1} C. Dong, H. Li and G. Mason, Regularity of
rational vertex operator algebra, \emph{Adv. Math.}  \textbf{312}
(1997), 148-166.

\bibitem [DLM2] {DLM2} C. Dong, H. Li and G. Mason, Twisted representations
of vertex operator algebras, \emph{Math. Ann. }  \textbf{310}   (1998),
571-600.

\bibitem [DLM3] {DLM3} C. Dong, H. Li and G. Mason, Modular invariance
of trace functions in orbifold theory and generalized moonshine\emph{,}
\emph{Comm. Math. Phys.}  \textbf{214} (2000), 1-56.

\bibitem [DMZ] {DMZ} C. Dong, G. Mason and Y. Zhu, Discrete series
of the Virasoro algebra and the moonshine module, \emph{Proc. Symp.
Pure. Math.}, Amer. Math. Soc.  \textbf{56} II (1994), 295-316.

\bibitem [FHL] {FHL} I. Frenkel, Y. Huang and J. Lepowsky, On axiomatic
approaches to vertex operator algebras and modules, \emph{Mem. Amer.
Math. Soc.} \textbf{ 104}  (1993).

\bibitem [FLM] {FLM} I. Frenkel, J. Lepowsky and A. Meurman, \emph{ }Vertex
Operator Algebras and the Monster, \emph{Pure and Appl. Math.}  \textbf{Vol.
134}, Academic Press, Boston, 1988.

\bibitem[FZ]{FZ} I. Frenkel and Y. Zhu., Vertex operator algebras
associated to representations of affine and Virasoro algebras, \emph{Duke
Math. J.}  \textbf{66} (1992), 123-168.

\bibitem [H] {H} Y. Huang, Vertex operator algebras and the Verlinde
conjecture, \emph{Comm. Contemp.  Math.}  \textbf{10} (2008),
 103-154.

\bibitem [KL] {KL} M. Karel and H. Li, Certain generating subspaces
for vertex operator algebras, \emph{J. Alg.}  \textbf{217}  (1999),
 393-421, 17B69.

\bibitem [L] {L} H. Li, Some finitenss properties of regular vertex
operator algebras, \emph{J. Alg.} \textbf{ 212 } (1999), 495-514.

\bibitem [LL]{LL}  J. Lepowsky and H. Li, Introduction to vertex
operator algebras and their representations, \emph{Progress in Math.}
 \textbf{227}. Birkhäuser Boston, Inc., Boston, MA, 2004

\bibitem [M] {M} M. Miyamoto, Representation theory of code vertex
operator algebra, \emph{J. Alg.}  \textbf{201} (1998),  115-150.

\bibitem [W] {W} W. Wang, Rationality of Virasoro vertex operator
algebras, \emph{Int. Math. Res. Not.}  \textbf{1993} (1993), 
197\textendash{}211.

\bibitem [Z]{Z} Y. Zhu, \emph{ }Modular Invariance of characters
of vertex operator algebras, \emph{J. Amer. Math. Soc.}  \textbf{9} 
(1996), 237-302.\end{thebibliography}
\end{document}